\newtheorem{theorem}{Theorem}
\newtheorem{proposition}[theorem]{Proposition}
\newtheorem*{theorem*}{Theorem}
\theoremstyle{remark}
\newtheorem{remark}[theorem]{Remark}
\newtheorem*{examples*}{Examples}
\numberwithin{equation}{subsection}
\begin{document}
	\title{A note on internality of certain differential systems}
	\author{Partha Kumbhakar and Varadharaj Ravi Srinivasan}
	\address{Indian Institute of Science Education and Research (IISER) Mohali, Sector 81, S.A.S. Nagar, Knowledge City, Punjab 140306, India.}
	\email{\url{parthakumbhakar@iisermohali.ac.in}, \url{ravisri@iisermohali.ac.in}}

	\begin{abstract}
		We prove two results, generalizing certain theorems by Jin and Moosa \cite{Jin-Rah-2020}, on the internality of the system of differential equations  \begin{equation*}
			\begin{cases}
				\begin{aligned} 
					x' &= f(x)\\
					y' &= g(x)y\\
				\end{aligned}
			\end{cases}
		\end{equation*}where $f$ and $g$ are rational functions in one variable.
	\end{abstract}

	\maketitle

	Let $K$ be a differential field of characteristic zero. We fix a universal differential field extension\footnote{A differential field extension $U$ of $K$ is said to be an \emph{universal extension} of $K$ if  for any finitely generated differential field extension of $F$ of $K$ contained in $U$ and for any finitely generated differential field extension $F_1$ of $F,$ which is not necessarily contained in $U$, there exists a differential  $F-$embeddeding from $F_1$ into $U$ (see \cite[page 133]{Kol73}).} $U$ of $K$ and assume that all differential field extensions of $K$ considered in this article are contained in $U.$ For any differential field extension  $M$ of $K,$ the subfield of constants of $M$ is denoted by $C_M.$ A \emph{generic solution over $K$} of a system of first order differential equations \begin{equation}\label{system-fode}
		\tag{S}
		\begin{cases}	\begin{aligned} 
				y'_1 &= f_1(y_1,\dots, y_n)\\
				\vdots\\
				y' _n&= f_n(y_1,\dots,y_n)\\
		\end{aligned}\end{cases}
	\end{equation} where $f_1,\dots,f_n $ are rational functions in $n$ variables over $K,$ is a tuple $(y_1, \dots,y_n)\in U^n$ such that for each $i, 1\leq i\leq n,$ $y'_i=f_i(y_1, \dots,y_n)$ and the field transcendence degree  $\mathrm{tr.deg}(K(y_1,\dots, y_n)|K)=n.$ 
	
	The system (\ref{system-fode}) is said to be
	\emph{almost internal to the constants} (respectively, \emph{internal to the constants}) if there are $m$ generic solutions $(y_{i1},\dots,y_{in}), 1\leq i\leq m,$ of $(\ref{system-fode})$ such that for every generic solution $(y_1,\dots, y_n)$  of (\ref{system-fode}), there exist  constants $c_1,\dots, c_l\in C_U$  such that $y_1,\dots, y_n$ are algebraic over the field $K(\Omega)$ (respectively, $y_1,\dots, y_n$ belongs to the field $K(\Omega)$), where $\Omega=\{y_{ij},c_p\ | \ 1\leq i\leq m, 1\leq j\leq n, 1\leq p\leq l\}.$

	The model theoretic concept of internality of a system of differential equations is known to be closely related to Kolchin's strongly normal extensions (see \cite[Section 2.7]{Poi01}), and it has its origin in the works of Rosenlicht \cite[Proposition]{Ros74} and Zilber \cite{Zil77}.  In \cite{Jin-Rah-2020}, Jin and Moosa prove two theorems, which appear as Theorem A and Theorem B in their paper, on almost internality of the system of differential equations \begin{equation} \tag{L}\label{JMsystem}
		\begin{cases}
			\begin{aligned} 
				x' &= f(x)\\
				y' &= xy\\
			\end{aligned}
		\end{cases}
	\end{equation} where $f$ is a rational function in one variable.  In Theorems \ref{thm-nonaut} and \ref{thm-aut} of this article, we generalize and extend both these results of Jin and Moosa. In particular, Theorem \ref{thm-nonaut} completely answers the question of when the system (\ref{JMsystem}) is almost internal to the constants, given that $x'=f(x)$ is internal to the constants.

	\begin{proposition}\label{prop-ddaac}  Let  
		\begin{equation*}
			\begin{cases}
				\begin{aligned} 
					x' &= f(x,y)\\
					y' &= g(x,y)\\
				\end{aligned}
			\end{cases}
		\end{equation*}
		be a system almost internal to the constants. Then, there exists a finitely generated differential field extension $M$ of $K$ such that  for any 
		generic solution $(x,y)$ over $M$ of the system\footnote{Generic solutions over $M$ of the system always exist. To see this, consider the rational function field $M(s,t)$ and equip this field with the derivation induced by the system: $s'=f(s,t)$ and $t'=g(s,t)$. Then, by definition, there is a differential $M-$embedding of $M(s,t)$ into $U.$ The image of $s,t$ under this embedding provides a generic solution over $M$ of the system.},  the differential field $M(x,y)$ contains two $M-$algebraically independent constants. Furthermore, if $N$ is a differential field intermediate to $M(x,y)$ and $M$ with $\mathrm{tr. deg}(N|M)=1,$ then $\mathrm{tr. deg}(C_N|C_M)=1.$  
	\end{proposition}
	
	\begin{proof}
		\sloppy Since the system is almost internal to the constants, there are generic solutions  $(x_1,y_1),\dots, (x_m,y_m)$ over $K$ of the system such that for any generic solution $(x,y)$ of the system over $M:=K(x_1,y_1,\dots, x_m,y_m),$ there are constants $c_1,\dots,c_l$ such that $x$ and $y$ are algebraic over $M(c_1,\dots, c_l).$   We now show that $M(x,y)$ contains  two $M-$algebraically independent constants.  Let $c_1,\dots, c_r$ be an $M-$transcendence basis of constants for $M(c_1,\dots, c_l).$ Since $\mathrm{tr.deg}(M(c_1,\dots, c_l,x,y)|M)=r,$ we must have $\mathrm{tr.deg}(M(c_1,\dots, c_l,x,y)|M(x,y))=r-2.$ Therefore, there is a nonzero polynomial $P\in M(x,y)[X_1,\dots,X_r]$ such that $P(c_1,\dots, c_r)=0.$ Since $c_1,\dots,c_r$ are constants, it follows that such a polynomial $P$ must belong to $C_{M(x,y)}[X_1,\dots,X_r]$ (\cite[Section 14, Theorem 2]{Kol48}). As $\mathrm{tr.deg}(M(c_1,\dots,c_l)|M)=r,$ it also follows that one of the coefficients of $P$ must be a constant nonalgebraic over $M.$ If we call this constant $u_1$ and replace $M$ by $M(u_1),$ then a similar argument shows the existence of another constant $u_2\in M(x,y),$  which is  nonalgebraic over $M(u_1).$ Observe that either  $u_1$ or $u_2$ must belong to $M(x,y)\setminus M(x).$ 
		
		Let $N$ be a differential field intermediate to $M(x,y)$ and $M$. Since $\mathrm{tr. deg}(N|M)=1,$ there is a nonzero polynomial $P\in C_N[X_1,X_2]$ such that $P(u_1,u_2)=0.$ Again, by the same argument as above, there is a constant $v\in N$ which is not algebraic over $M.$ 
	\end{proof}

	\begin{theorem}\label{f-is-ricatti}  Let $f$ be a rational function in one variable over $K.$ The equation $x'=f(x)$ is internal to  the constants if and only if $f(x)=a_2x^2+a_1x+a_0,$ where $a_0,a_1, a_2\in K.$  
	\end{theorem}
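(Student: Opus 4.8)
The plan is to treat the two implications separately, the backward one being short and the forward one carrying all the weight.

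For the \emph{if} direction, suppose $f(x)=a_2x^2+a_1x+a_0$ with $a_i\in K$. I would fix three generic solutions $x_1,x_2,x_3$ with $\mathrm{tr.deg}(K(x_1,x_2,x_3)|K)=3$ (available in $U$) and show every generic solution $x$ lies in $K(x_1,x_2,x_3,C_U)$. The engine is the elementary identity $\dfrac{f(x)-f(p)}{x-p}=a_2(x+p)+a_1$, valid for any solution $p$, which gives, for solutions $p,q$,
\[
\left(\frac{x-p}{x-q}\right)'\Big/\frac{x-p}{x-q}=\big(a_2(x+p)+a_1\big)-\big(a_2(x+q)+a_1\big)=a_2(p-q),
\]
a quantity independent of $x$. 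Consequently the cross-ratio $\frac{(x-x_1)(x_3-x_2)}{(x-x_2)(x_3-x_1)}$ has vanishing logarithmic derivative, hence equals some $\lambda\in C_U$; solving this fractional-linear relation for $x$ places $x\in K(x_1,x_2,x_3,\lambda)\subseteq K(x_1,x_2,x_3,C_U)$, which is internality. (If $a_2=0$ the same identity shows $\frac{x-x_1}{x-x_2}$ is already a constant, so two solutions suffice.)

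For the converse, assume $x'=f(x)$ is internal and fix a generic solution $x$. Internality provides finitely many generic solutions; let $M$ be the field they generate over $K$, chosen so that $x$ is transcendental over $M$, so that $M(x)|M$ is purely transcendental, and so that $x\in M(C_U)$. Since $x$ involves only finitely many constants, $x\in F:=M(c_1,\dots,c_k)$ with each $c_j$ constant; as $F$ is generated over $M$ entirely by constants, $F=M(C_F)$ is a \emph{constants extension}. My aim is to descend this to $M(x)$ and produce a single constant generating $M(x)$ over $M$.

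The crux — and the step I expect to be the main obstacle — is to obtain a constant $u\in M(x)$ with $M(x)=M(u)$, equivalently a constant that is a Möbius function of $x$ over $M$. Here I would use the structure theory of constants extensions: any differential field intermediate between $M$ and a constants extension $F=M(C_F)$ is again of that form (a Wronskian argument recovers the constant coefficients of any element), so $M(x)=M(C_{M(x)})$. Because $M(x)|M$ is purely transcendental, $M$ is algebraically closed in $M(x)$; hence $C_M$ is algebraically closed in $C_{M(x)}$, so $C_{M(x)}|C_M$ is a regular extension whose base change along $C_M\hookrightarrow M$ is $M(x)|M$. As the latter has genus zero, so does $C_{M(x)}|C_M$, and it inherits a rational place from $M(x)|M$; therefore $C_{M(x)}=C_M(u)$ for a single constant $u$, whence $M(x)=M(C_M(u))=M(u)$, and Lüroth's theorem forces $u=\frac{ax+b}{cx+d}$ with $a,b,c,d\in M$ and $ad-bc\neq 0$. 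I emphasize that it is exactly the production of a \emph{degree-one} constant that separates internality from almost internality: for $x'=1/x$ the element $x^2-2t$ is a degree-two first integral (so the equation is almost internal), yet no Möbius first integral exists, and indeed $1/x$ is not a quadratic polynomial.

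Finally, differentiating $u(cx+d)=ax+b$, using $u'=0$ and $x'=f(x)$, and substituting $u=\frac{ax+b}{cx+d}$, the factors $cx+d$ cancel and I am left with
\[
f(x)=\frac{(a'c-ac')\,x^2+(a'd+b'c-bc'-ad')\,x+(b'd-bd')}{bc-ad},
\]
a polynomial of degree at most two with coefficients in $M$. Since $f\in K(x)$ is given, these coefficients necessarily lie in $K$, so $f(x)=a_2x^2+a_1x+a_0$ with $a_i\in K$, as required. This last substitution is purely computational; all the conceptual weight of the converse sits in the extraction of the Möbius constant in the preceding step.
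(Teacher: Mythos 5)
Your backward direction is correct and is essentially the paper's own argument: the paper uses the same cross-ratio $\frac{(x-x_2)(x_3-x_1)}{(x-x_1)(x_3-x_2)}$, verifying its constancy via a first-order linear ODE rather than your (equally valid) direct logarithmic-derivative computation from $\frac{f(x)-f(p)}{x-p}=a_2(x+p)+a_1$. Your forward direction also follows the paper's skeleton: reduce to $M(x)=M(C_{M(x)})$ by Kolchin's theorem on intermediate fields of constant-generated extensions, show $\mathrm{tr.deg}(C_{M(x)}|C_M)=1$, produce a single constant that is a M\"obius function of $x$, differentiate to force $f$ quadratic, and descend the coefficients to $K$ using $f\in K(x)$.

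The genuine gap is in your extraction of the degree-one constant: the assertion that $C_{M(x)}|C_M$ ``inherits a rational place from $M(x)|M$'' is a non sequitur. Everything before it is fine ($C_M$ is algebraically closed in $C_{M(x)}$, the extension is regular, its base change to $M$ is $M(x)|M$, and genus is stable under base extension in characteristic zero, so $C_{M(x)}|C_M$ has genus zero). But a genus-zero regular extension is the function field of a conic, and rational points do not descend along base extension: a degree-one place of $M(x)|M$ restricts to $C_{M(x)}$ either as a degree-one place or \emph{trivially}, the latter occurring exactly when $C_{M(x)}$ embeds into $M$ over $C_M$, which your argument does not rule out. (Compare $s^2+t^2=-1$ over $\mathbb{R}$: $\mathbb{R}$ is algebraically closed in the function field of this conic, the conic has no $\mathbb{R}$-point, yet it acquires points over suitable extensions in which $\mathbb{R}$ remains algebraically closed; so ``rational after base change, with the small field algebraically closed in the big one'' does not yield rationality downstairs.) Hence $C_{M(x)}=C_M(u)$ is not justified as stated. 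The paper circumvents precisely this point: it invokes Chevalley's theorem to obtain a \emph{finite algebraic} extension $\tilde{C}_M$ of $C_M$ with $\tilde{C}_M(c_1,\dots,c_l)=\tilde{C}_M(c)$ (a conic splits after a quadratic extension), notes that the unique extension of the zero derivation to an algebraic extension is zero, so $c$ is again a constant, and then runs your M\"obius differentiation over $\tilde{M}:=M\tilde{C}_M$; since $f\in K(x)$ is then a polynomial of degree at most two with coefficients in $\tilde{M}$, the coefficients lie in $K$ exactly as in your final step. With that one repair --- working over a finite constant extension instead of insisting that $C_{M(x)}$ itself be rational over $C_M$ --- your proof goes through and coincides with the paper's.
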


	\begin{proof} Let $x'=f(x)$ be internal to the constants. Then, there are generic solutions
		$x_1,\dots,x_m$ over $K$  such that for any generic solution $x$ over $M:=K(x_1,\dots,x_m)$ there are constants $c_1,\dots, c_l$ such that $x\in M(c_1,\dots, c_l)$.  From \cite[Chapter 2, Corollary 2]{Kol73},  we know that $M(x)$ is generated over $M$ as a field by a set of constants. Therefore, we may assume that $M(x)=M(c_1,\dots,c_l).$ We first claim that there exists an algebraic extension $\tilde{M}$ of $M$ such that $\tilde{M}(x)=\tilde{M}(c)$ for some constant $c$.  
		
		Note that $C_{M(x)}$ and $M$ are linearly disjoint over  $C_M$  (\cite[Chapter 2, Corollary 1]{Kol73}). Therefore, $C_M(c_1,\dots, c_l)$ and $M$ are linearly disjoint over $C_M.$ Since $M(c_1,\dots,c_l)=M(x),$  we have $\mathrm{tr.deg}(M(c_1,\dots,c_l)|M)=1$ and from linear disjointness, we obtain $\mathrm{tr.deg}(C_M(c_1,\dots, c_l)|C_M)=1.$ Now since $M(c_1,\dots,c_l)=M(x)$ is a rational function field, there is a finite algebraic extension $\tilde{C}_M$ of $C_M$ such that  $\tilde{C}_M(c_1,\dots, c_l)=\tilde{C}_M(c)$ (\cite[Theorem 5]{Chev-63}). Any extension of a zero derivation on a field to its algebraic closure is again a zero derivation. Therefore, $c$ must also be a constant. Let $\tilde{M}:=M\tilde{C}_M$ and observe that $$\tilde{M}(x)=\tilde{M}(c_1,\dots,c_l)=\tilde{M}(c).$$ This proves the claim.
		
		Since $\tilde{M}(x)=\tilde{M}(c),$ there exists $\alpha,\beta,\gamma, \delta\in \tilde{M}$ with $\alpha\delta-\beta\gamma\neq 0$ such that $$c=\frac{\alpha x+\beta}{\gamma x+\delta}.$$  A simple calculation shows that $f(x)=x'=a_2x^2+a_1x+a_0$ for some $a_0,a_1,a_2\in \tilde{M}.$ Now since $f\in K(x),$ we obtain that  $a_0, a_1, a_2\in K.$

		To prove the converse, let $x, x_1,x_2,x_3$ be distinct generic solutions of $x'=a_2x^2+a_1x+a_0$ over $K.$ It is observed in \cite[Page 102]{NJ20} that $$c:=\frac{(x-x_2)(x_3-x_1)}{(x-x_1)(x_3-x_2)},$$ is a constant\footnote{It is easily seen that the first order differential equation $y'=(a_1(x+x_1+x_2+x_3)+a_0)y$  has $(x-x_2)(x_3-x_1)$ and $(x-x_1)(x_3-x_2)$ as its solutions and therefore $c,$ being the ratio of these two solutions, must be a constant.}.
		Since
		$$x=\frac{x_2(x_3 - x_1) + c x_1(x_2 - x_3)}{(x_3 - x_1) + c (x_2 - x_3)},$$ we now obtain $x\in K(x_1, x_2, x_3,c).$ This proves that the equation $x'=a_2x^2+a_1x+a_0$   is internal to the constants.
	\end{proof}

	\begin{remark} We note here that a particular case of Theorem \ref{f-is-ricatti} can be derived from certain recent results (\cite{JM25, KRS24}). Suppose that $K$ is an algebraically closed field and that the equation $x'=f(x)$ is both internal to the constants and weakly orthogonal to the constants\footnote{That is, the rational function field $K(x)$ with the extended derivation $x'=f(x)$ has the same field of constants as $K.$}. Then, it is known that the binding group of the equation $x'=f(x)$ is a linear algebraic group. Therefore, one can apply \cite[Proposition 6.5]{JM25} and conclude   that $K(x)=K(y),$ where $y'=ay^2+by+c$ for some $a,b,c\in K.$ Writing $$x=\frac{\alpha y+\beta}{\gamma y+\delta},$$ where $ \alpha, \beta, \gamma, \delta\in K$ with $\alpha\delta-\beta\gamma\neq0,$ and taking derivatives, we obtain the conclusion of Theorem \ref{f-is-ricatti} that $f$ is a polynomial over $K$ of degree at most $2.$ 
		
		The conditions that $x'=f(x)$ is both internal to the constants and weakly orthogonal to the constants imply that the differential field $K(x)$ can be embedded in a strongly normal extension of $K.$ Now, applying \cite[Theorem 1.1 (iii)]{KRS24}, we obtain that $f$ is a polynomial over $K$ of degree at most $2.$ Thus,  Theorem \ref{f-is-ricatti} provides a generalization of these results, as it requires neither that $x'=f(x)$ be weakly orthogonal to the constants nor that $K$ be an algebraically closed field.   
	\end{remark}
	
	Under the assumption that $x'=f(x)$ is internal to the constants with the binding group not of dimension $3,$ it is shown in \cite[Theorem A]{Jin-Rah-2020} that the system  (\ref{JMsystem}) is  almost internal to the constants if and only if it splits; which implies, the system can be transformed into a system of the form $$
	\begin{cases}
		\begin{aligned} 
			x' &= f(x)\\
			y' &= ay\\
		\end{aligned}
	\end{cases},$$ where $a\in K.$ In the next theorem, using Theorem \ref{f-is-ricatti} and Proposition \ref{prop-ddaac} and without any assumption on the binding group of $x'=f(x)$, we shall completely determine  when the system (\ref{JMsystem}) is almost internal to the constants.

	\begin{theorem}\label{thm-nonaut} Let $K$ be a differential field and $f$ be a nonzero rational function in one variable over $K$.  The system
		(\ref{JMsystem}) is almost internal to the constants and $x'=f(x)$ is internal to the constants  if and only if $f(x)=a_2x^2+a_1x+a_0,$ where $a_0,a_1\in K$ and  $a_2$ is a nonzero rational number.
	\end{theorem}

	\begin{proof} We know from Theorem \ref{f-is-ricatti} that $f(x)=a_2x^2+a_1x+a_0,$ where $a_0, a_1, a_2\in K.$ Suppose that the system (\ref{JMsystem}) is almost internal to the constants.  By Proposition \ref{prop-ddaac}, there exists a differential field extension $M$ of $K$ such that for any generic solution $(x,y)$ over $M,$ the differential field $M(x,y)$ contains two $M-$algebraically independent constants. Then $M(x,y)$ contains a constant which is not in $M(x)$ and therefore the latter must contain an element $z$ such that $z'=nxz$ for some positive integer $n$ (\cite[Example 1.11]{Mag94}). Let $z=P/Q$ where $P, Q\in M[x].$ We can assume that $P$ and $Q$ are relatively prime and $Q$ is monic. Write $P=\alpha_px^p+\alpha_{p-1}x^{p-1}+\cdots, Q=x^q+b_{q-1}x^{q-1}+\cdots\in M[x]$ with $\alpha_p\neq 0.$ Then $$P'Q-Q'P=nxPQ$$ and we obtain $(p-q)a_2=n.$ Since $n\neq 0,$  $a_2=n/(p-q)$ is a nonzero rational number. 
		
		For the converse, suppose that $f(x)=a_2x^2+a_1x+a_0,$ where $a_0, a_1\in K$ and $a_2=m_1/m_2,$  $(m_1, m_2)\in \mathbb Z^2$ a nonzero rational number. Let $(x,y)$ be a generic solution of the system (\ref{JMsystem}). We shall now find generic solutions $(x_1,y_1)$ and $(x_2, y_2)$ of the system and constants $c_1,c_2$ so that $x,y$ are algebraic over the field $K(y_1,x_1,y_2,x_2,c_1,c_2).$ This will then complete the proof of the theorem.

		Let $v$ be an element algebraic over the field $K(x,y)$ satisfying $v^{m_2}=y^{-m_1}.$ Then $v'=-a_2x v,$  $(a_2x)'=a_2x'=(a_2x)^2+a_1(a_2x)+a_2a_0$ and  $v''=a_1v'-a_2a_0v.$ Consider the rational function field $K(v,v')(v_1,v'_1,v_2,v'_2)$ and extend the derivation on $K(v,v')$ to $K(v,v')(v_1,v_2,v'_1,v'_2)$ by declaring  $v''_1=a_1v'_1-a_2a_0v_1$ and $v''_2=a_1v'_2-a_2a_0v_2$.  Let $N=K(v_1,v'_1,v_2,v'_2).$ Note that  $v_1,v_2$ are $C_U-$linearly independent and that the Wronskian of $v, v_1, v_2$ is zero. Therefore $v=c_1v_1+c_2v_2,$ for some constants $c_1,c_2.$ This implies, $v\in N(c_1,c_2)$ and since $a_2x=-v'/v\in N(c_1,c_2),$ we obtain that $x\in N(c_1,c_2).$ 
		
		Let $y_1$ and $y_2$ be elements algebraic over the field $N$ such that $v^{m_2}_1=y^{-m_1}_1$ and $v^{m_2}_2=y^{-m_1}_2.$ Let $x_1:=-v'_1/a_2v_1$ and $x_2:=-v'_2/a_2v_2.$ Then, it is easy to see that $(x_1,y_1)$ and $(x_2,y_2)$ are generic solutions of the system (\ref{JMsystem}). Now we consider the field  $K(y_1,x_1,y_2,x_2,c_1,c_2).$ Clearly, $v_1$ and $v_2$ are algebraic over $K(y_1,x_1,y_2,x_2,c_1,c_2)$ and since $v=c_1v_1+c_2v_2,$ we obtain that $v$ is also algebraic over $K(y_1,x_1,y_2,x_2,c_1,c_2).$ Since $x=-v'/a_2v,$ we obtain that $x$ is  algebraic over $K(y_1,x_1,y_2,x_2,c_1,c_2).$ Finally, $v^{m_2}=y^{-m_1}$ implies that $y$ is algebraic over $K(y_1,x_1,y_2,x_2,c_1,c_2).$ 
	\end{proof}

	\begin{theorem}\label{thm-aut} 
		Let $C$ be an algebraically closed differential field with zero derivation, and $f,g$ be two nonzero rational functions in one variable over $C.$ The system \begin{equation}\label{gen-log-sys}\tag{A}
			\begin{cases}
				\begin{aligned} 
					x' &= f(x)\\
					y' &= g(x)y\\
				\end{aligned}
			\end{cases}
		\end{equation}  is almost internal to the constants if and only if the following two conditions hold:
		\begin{enumerate}[(i)]
			\item \label{exact-expforms}$\frac{1}{f(x)}=\frac{\partial u}{\partial x}$ or $\frac{1}{f(x)}=\frac{c}{u}\frac{\partial u}{\partial x}$ for some $c\in C\setminus 0$ and $u\in C(x).$\\
			\item\label{intconstants-form} $\frac{b+mg(x)}{f(x)}=\frac{1}{v}\frac{\partial v}{\partial x}$ for some integer $m\in \mathbb Z,$ $b\in C$ and $v\in C(x).$ 
		\end{enumerate}
	\end{theorem}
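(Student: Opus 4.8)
The plan is to read almost internality of \eqref{gen-log-sys} through Proposition \ref{prop-ddaac} as the existence of two algebraically independent first integrals, and to match one of them to the $x$-flow (condition (i)) and the other to the $y$-extension (condition (ii)). The computational engine throughout is the logarithmic-derivative dictionary in the differential field $C(x)$ with $x'=f(x)$: for $u\in C(x)$ one has $u'=1$ exactly when $\partial u/\partial x=1/f$, and $u'/u=1/c$ for a constant $c$ exactly when $1/f=(c/u)\,\partial u/\partial x$; while for $v\in C(x)$ and an integer $m$ the element $w=vy^{-m}\in C(x,y)$ satisfies $w'/w=b$ exactly when $(b+mg)/f=(1/v)\,\partial v/\partial x$. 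Thus the two displayed conditions say precisely that the $x$-part and the $y$-part each furnish a first integral after passing to at most two generic solutions, and I must show that together these are equivalent to the existence of two independent constants.

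For the forward implication, suppose \eqref{gen-log-sys} is almost internal. Proposition \ref{prop-ddaac} supplies a differential field $M\supseteq C$, free from $C(x,y)$ over $C$, with two $M$-algebraically independent constants in $M(x,y)$, and with the property that every intermediate $N$ of transcendence degree one over $M$ carries a constant transcendental over $C_M$. Applying this to $N=M(x)$ produces a constant $\theta\in M(x)\setminus\overline{C_M}$. Writing $\theta$ as a rational function of $x$ over $M$ and imposing $\theta'=0$, a partial-fraction computation shows that the obstruction is governed by the elementary integral $\int dx/f$; by the classical description of elementary integrals this has the form $R+\sum_i c_i\log u_i$ with $R,u_i\in C(x)$, and the requirement that $C_{M(x)}$ have transcendence degree \emph{exactly} one over $C_M$ forces either $R$ alone with all residues of $1/f$ vanishing (giving $1/f=\partial u/\partial x$) or a single logarithm $c\log u$ with $R$ absent (giving $1/f=(c/u)\,\partial u/\partial x$), which is condition (i). For condition (ii), the second constant genuinely involves $y$, so $M(x,y)$ has a constant over $M(x)$; the criterion of Magid \cite[Example 1.11]{Mag94}, already used in the proof of Theorem \ref{thm-nonaut}, yields a nonzero integer $m$ and $z\in M(x)$ with $z'/z=mg(x)$. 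Using the freeness of $M$ and $C(x)$ over $C$ together with $g\in C(x)$, I descend $z$ to a rational $v\in C(x)$; the piece of the logarithmic derivative that cannot be matched rationally is a constant $b\in C$, and this yields $(b+mg)/f=(1/v)\,\partial v/\partial x$, which is condition (ii).

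For the converse I argue constructively, in the spirit of the proof of Theorem \ref{thm-nonaut}. Fix a generic solution $(x,y)$ and adjoin two further generic solutions $(x_1,y_1),(x_2,y_2)$. Condition (i) gives $u$ with $u'=1$ or $u'/u$ constant; then $u(x_1)-u(x)$, respectively $u(x_1)/u(x)$, is a constant $c_1$, and inverting the rational relation $u(x)=u(x_1)-c_1$, respectively $u(x)=u(x_1)/c_1$, makes $x$ algebraic over $C(x_1,c_1)$. Condition (ii) gives $m\neq 0$, $b\in C$, $v\in C(x)$; with $w=vy^{-m}$ one has $w'/w=b$, so either $w$ itself (if $b=0$) or the ratio $w/w_2$ with $w_2=v(x_2)y_2^{-m}$ (if $b\neq 0$) is a constant $c_2$, and the relation $y^{m}=v(x)/w$ makes $y$ algebraic over $C(x,x_2,y_2,c_2)$. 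Collecting these dependencies shows $x$ and $y$ are algebraic over the field generated by $C$, the two fixed solutions, and $c_1,c_2$, so \eqref{gen-log-sys} is almost internal.

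The main obstacle is the forward derivation of condition (i): converting the purely existential statement that $M(x)$ contains a constant transcendental over $C_M$ into the two precise rational-function normal forms, and in particular ruling out the mixed case in which $\int dx/f$ has both a nontrivial rational part and genuine logarithmic terms. This is where the residue bookkeeping over the algebraically closed field $C$, together with the single-logarithm count forced by exactness of the transcendence-degree-one condition, must be handled carefully. A second, equally essential point is verifying that the two first integrals are algebraically \emph{independent} rather than functionally related: this is exactly why the integer $m$ in condition (ii) must be nonzero, so that the second constant genuinely involves $y$ and the pair $\{c_1,c_2\}$ is the transcendence basis of constants demanded by Proposition \ref{prop-ddaac}.
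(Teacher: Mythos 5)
Your converse is essentially the paper's argument (the paper compares $(x,y)$ with one auxiliary generic solution via a differential $C$-isomorphism; your version with two auxiliary solutions is the same mechanism), and your route to condition (\ref{exact-expforms}) is in substance a re-derivation of the Proposition of Rosenlicht \cite{Ros74}, which the paper simply cites: if your residue analysis were carried out it would be fine, but as written you acknowledge it as an ``obstacle'' and leave it a sketch, including the descent issue that the residues and partial-fraction data of your constant $\theta\in M(x)$ live over $\overline{M}$, not over $C$ --- exactly the point Rosenlicht's proposition is engineered to handle.

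The genuine gap is the forward derivation of condition (\ref{intconstants-form}), in the step ``Using the freeness of $M$ and $C(x)$ over $C$ together with $g\in C(x)$, I descend $z$ to a rational $v\in C(x)$.'' Magid's criterion \cite[Example 1.11]{Mag94} gives $z\in M(x)$ with $z'/z=mg(x)$ \emph{exactly}, with no additive constant, and freeness of $M$ and $C(x,y)$ over $C$ does not permit a naive descent: the factors of $z$ can involve the generic solutions $x_1,\dots,x_m$ generating $M$ in an essential way (already in Theorems \ref{f-is-ricatti} and \ref{thm-nonaut} the relevant elements are built from factors $(x-x_i)$, whose roots are themselves solutions of $x'=f(x)$ transcendental over $C$), so a partial-fraction computation over $\overline{M}$ need not close up over $C$, and your heuristic that ``the piece that cannot be matched rationally is a constant $b\in C$'' has no justification --- if naive descent worked it would produce $b=0$, whereas $b\neq 0$ genuinely occurs. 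This is precisely why the paper splits into two cases with $F=C(x,y)$: if $C_F\neq C$, Magid applies over $C$ directly and (\ref{intconstants-form}) holds with $b=0$; if $C_F=C$, the new constants appear only after base extension, and the paper invokes \cite[Proposition 4.2]{KS25} to produce an intermediate field $L\subseteq F$ of transcendence degree $2$ embeddable in a strongly normal extension, upgrades this to a Picard--Vessiot embedding via \cite[Theorem 1.2]{KS25} (using that $C(x,y)$ is purely transcendental), extracts $\zeta\in C(x,y)$ with $\zeta'=a\zeta$, $a\in C$ nonzero, and then applies the Kolchin--Ostrowski theorem to get $h=y^m\zeta^n\in C(x)$, whence (\ref{intconstants-form}) with $b=na$. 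The constant $b$ thus comes from a Picard--Vessiot element, an input entirely absent from your argument; your proposal proves nothing in the case $C_F=C$. Your closing observation that $m$ must be nonzero for the converse to control $y$ is fair (the paper's converse likewise needs $vy^{-m}\notin C(x)$), but it does not repair the missing case.
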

	
	\begin{proof} Let $(x,y)$ be a generic solution of the system (\ref{gen-log-sys}) and $F=C(x,y).$ We first make the following observations. Let $z\in F\setminus C$ such that $z'=0.$ Then $z\notin C(x);$ otherwise $f(x)\frac{\partial z}{\partial x}=0,$ which is a contradiction. Therefore, $z\in F\setminus C(x).$ As noted in the proof of Theorem \ref{thm-nonaut}, there exists a positive integer $m$ and an element $v\in C(x)$ such that $v'=mg(x)v.$ Therefore, $f(x)\frac{\partial v}{\partial x}=mg(x)v$ and we obtain $$\frac{mg(x)}{f(x)}=\frac{1}{v}\frac{\partial v}{\partial x}.$$ Thus, if $z\in F\setminus C$ such that $z'=0$ then $z\in F\setminus C(x)$, and in that case the condition (\ref{intconstants-form}) holds with $b=0$.

		Along with these observations, we suppose that the system (\ref{gen-log-sys}) is almost internal to the constants. Then, by Proposition \ref{prop-ddaac}, there exists a differential field extension $M$ of $C$ such that $M(x,y)$ contains two $M-$algebraically independent constants. Furthermore, since $M(x)$ is a differential subfield of $M(x,y)$ with $\mathrm{tr. deg}(M(x)|M)=1,$ it follows that $\mathrm{tr. deg}(C_{M(x)}|C_M)=1.$ By \cite[Proposition]{Ros74}, $C(x)$ contains an element $u$ such that either $u'=1$ or $u'=cu$ for some nonzero $c\in C.$ Then $\frac{1}{f(x)}$ is either $\frac{\partial u}{\partial x}$ or $\frac{1}{f(x)}=\frac{c}{u}\frac{\partial u}{\partial x}.$ This implies that condition (\ref{exact-expforms}) holds.

		To prove that condition (\ref{intconstants-form}) also holds, we consider the following two cases.
		Suppose that $C_F\neq C$. Then, by the observation made in the first paragraph, the condition (\ref{intconstants-form}) holds. Now suppose that $C_F=C.$ Since there exists a differential field $M$ such that $M$ and $F$ are free over $C$ and $MF$ contains two $M-$algebraically independent constants, by \cite[Proposition 4.2]{KS25}, there is a differential field $L$ intermediate to $C$ and $F$ such that $\mathrm{tr. deg}(L|C)=2$ and $L$ can be embedded in a strongly normal extension of $C.$ Since $L$ is contained in the purely transcendental extension $C(x,y),$ by \cite[Theorem 1.2]{KS25}, $L$ can be embedded in a Picard-Vessiot extension of $C$\footnote{The existence of the field $L$ can also be obtained using results from model theory (see \cite[Theorem 3.5]{EJ24})}. The differential Galois group of any Picard-Vessiot extension of $C$ is a connected commutative linear algebraic group over $C$. Therefore, by the fundamental theorem of differential Galois theory, $L$ must also be a Picard-Vessiot extension of $C$ with a differential Galois group isomorphic to $\mathbb{G}_a\times \mathbb{G}_m$ or $\mathbb{G}_m\times \mathbb{G}_m.$ This implies, $L=C(\alpha, \beta)$ where $\alpha, \beta$ are $C-$algebraically independent, $\beta'=c_2\beta$ for some nonzero $c_2\in C$ and either $\alpha'=1$ or $\alpha'=c_1\alpha$ for some nonzero $c_1\in C.$ Consequently, there exists an element $\zeta\in C(x,y)$  such that $\zeta'= a\zeta $ for some nonzero $a\in C.$ By the Kolchin-Ostrowski  theorem, there exists integers $m,n$ such that $h:=y^m\zeta^n\in C(x).$ Then $$h'=f(x)\frac{\partial h}{\partial x}=my^m\zeta^ng(x)+nay^m\zeta^n.$$
		From the above equation, we obtain
		$$\frac{mg(x)+b}{f(x)}=\frac{1}{h(x)}\frac{\partial h}{\partial x}, \quad na=:b\in C.$$ Thus, we have shown that the condition (\ref{intconstants-form}) holds in both cases.

		To prove the converse, let $(x,y)$ be a generic solution of the system (\ref{gen-log-sys}). From conditions (\ref{exact-expforms}) and (\ref{intconstants-form}), we obtain that  $C(x)$ contains an element $s$ such that $s'=1$ or $s'=cs$ for some nonzero $c\in C$ and that there is an element $t\in C(x,y)\setminus C(x)$ such that $t'=bt$ for some $d\in C.$ Note that $s=u$ and $t=y^m/v.$ Now let $(x_1,y_1)$ be another generic solution of the system (\ref{gen-log-sys}). Consider the differential $C-$isomorphism $\phi:C(x,y)\rightarrow C(x_1,y_1),$ where $\phi(x)=x_1$ and $\phi(y)=y_1.$  Let $c_2:=t/\phi(t).$ If $s'=1$ then let $c_1:=s-\phi(s)$  and if $s'=cs$ then let $c_1:=s/\phi(s).$  In any event, note that $c_1, c_2$ are constants and that $s,t\in C(x_1,y_1,c_1,c_2).$ Since $x,y$ are algebraic over $C(s,t),$ they are also algebraic over $C(x_1,y_1,c_1,c_2)$. This shows that the system (\ref{gen-log-sys}) is almost internal to the constants.
	\end{proof}
	
	We conclude this article with the following observations. When $g(x)=x,$ we recover Theorem B of \cite{Jin-Rah-2020}. The nonorthogonality of the system (\ref{gen-log-sys}), under the assumption that $x'=f(x)$ is orthogonal to the constants, has been examined in \cite[Corollary 5.5]{JJP23}.  Interestingly, the condition (\ref{intconstants-form}) of Theorem \ref{thm-aut} also appears in their result.
	
	{\bf Acknowledgement.} We thank the anonymous referee of this article for various helpful suggestions and references.

	\bibliographystyle{alpha}
	\bibliography{PKVRS_IC}

\begin{thebibliography}{KRS24}

\bibitem[Che63]{Chev-63}
Claude Chevalley.
\newblock {\em Introduction to the theory of algebraic functions of one
  variable}, volume No. VI of {\em Mathematical Surveys}.
\newblock American Mathematical Society, Providence, RI, 1963.

\bibitem[EJ24]{EJ24}
Christine Eagles and Léo Jimenez.
\newblock Internality of autonomous algebraic differential equations, 2024.

\bibitem[JJP23]{JJP23}
R\'{e}mi Jaoui, L\'{e}o Jimenez, and Anand Pillay.
\newblock Relative internality and definable fibrations.
\newblock {\em Adv. Math.}, 415:Paper No. 108870, 38, 2023.

\bibitem[JM20]{Jin-Rah-2020}
Ruizhang Jin and Rahim Moosa.
\newblock Internality of logarithmic-differential pullbacks.
\newblock {\em Trans. Amer. Math. Soc.}, 373(7):4863--4887, 2020.

\bibitem[JM25]{JM25}
R\'emi Jaoui and Rahim Moosa.
\newblock Abelian reduction in differential-algebraic and bimeromorphic
  geometry.
\newblock {\em Annales de l'Institut Fourier}, 75(4):1811--1853, 2025.

\bibitem[Kol48]{Kol48}
E.~R. Kolchin.
\newblock Algebraic matric groups and the {P}icard-{V}essiot theory of
  homogeneous linear ordinary differential equations.
\newblock {\em Ann. of Math. (2)}, 49:1--42, 1948.

\bibitem[Kol73]{Kol73}
E.~R. Kolchin.
\newblock {\em Differential algebra and algebraic groups}.
\newblock Pure and Applied Mathematics, Vol. 54. Academic Press, New
  York-London, 1973.

\bibitem[KRS24]{KRS24}
Partha Kumbhakar, Ursashi Roy, and Varadharaj~R. Srinivasan.
\newblock A classification of first order differential equations.
\newblock {\em J. Algebra}, 644:580--608, 2024.

\bibitem[KS25]{KS25}
Partha Kumbhakar and Varadharaj~Ravi Srinivasan.
\newblock Strongly normal extensions and algebraic differential equations,
  2025.

\bibitem[Mag94]{Mag94}
Andy~R. Magid.
\newblock {\em Lectures on differential {G}alois theory}, volume~7 of {\em
  University Lecture Series}.
\newblock American Mathematical Society, Providence, RI, 1994.

\bibitem[Nag20]{NJ20}
Joel Nagloo.
\newblock Algebraic independence of generic {P}ainlev\'e{} transcendents:
  {$P_{III}$} and {$P_{VI}$}.
\newblock {\em Bull. Lond. Math. Soc.}, 52(1):100--108, 2020.

\bibitem[Poi01]{Poi01}
Bruno Poizat.
\newblock {\em Stable groups}, volume~87 of {\em Mathematical Surveys and
  Monographs}.
\newblock American Mathematical Society, Providence, RI, 2001.
\newblock Translated from the 1987 French original by Moses Gabriel Klein.

\bibitem[Ros74]{Ros74}
Maxwell Rosenlicht.
\newblock The nonminimality of the differential closure.
\newblock {\em Pacific J. Math.}, 52:529--537, 1974.

\bibitem[Zil77]{Zil77}
BI~Zil'ber.
\newblock Structure of models of categorical theories and the finite
  axiomatizability problem.
\newblock {\em preprint, Kemerovo Univ., Kemerovo}, pages 2800--77, 1977.

\end{thebibliography}

\end{document}